\begin{document}
\newtheorem{theo}{Theorem}[section]
\newtheorem{prop}[theo]{Proposition}
\newtheorem{lemma}[theo]{Lemma}
\newtheorem{exam}[theo]{Example}
\newtheorem{coro}[theo]{Corollary}
\theoremstyle{definition}
\newtheorem{defi}[theo]{Definition}
\newtheorem{rem}[theo]{Remark}


\newcommand{\Bb}{{\bf B}}
\newcommand{\Nb}{{\bf N}}
\newcommand{\Qb}{{\bf Q}}
\newcommand{\Rb}{{\bf R}}
\newcommand{\Zb}{{\bf Z}}
\newcommand{\Ac}{{\mathcal A}}
\newcommand{\Bc}{{\mathcal B}}
\newcommand{\Cc}{{\mathcal C}}
\newcommand{\Dc}{{\mathcal D}}
\newcommand{\Fc}{{\mathcal F}}
\newcommand{\Ic}{{\mathcal I}}
\newcommand{\Jc}{{\mathcal J}}
\newcommand{\Lc}{{\mathcal L}}
\newcommand{\Oc}{{\mathcal O}}
\newcommand{\Pc}{{\mathcal P}}
\newcommand{\Sc}{{\mathcal S}}
\newcommand{\Tc}{{\mathcal T}}
\newcommand{\Uc}{{\mathcal U}}
\newcommand{\Vc}{{\mathcal V}}

\author{Nik Weaver}

\title [Kadison-Singer problem]
       {The Kadison-Singer problem in discrepancy theory, II}

\address {Department of Mathematics\\
          Washington University in Saint Louis\\
          Saint Louis, MO 63130}

\email {nweaver@math.wustl.edu}

\date{\em March 10, 2013}

\subjclass[2000]{Primary 05A99, 11K38, 46L05.}
\thanks{Supported by NSF grant DMS-1067726.}


\begin{abstract}
We apply Srivastava's spectral sparsification technique to a vector
balancing version of the Kadison-Singer problem. The result is a
one-sided version of the conjectured solution.
\end{abstract}

\maketitle

The celebrated Kadison-Singer problem (KSP) is equivalent to a simple
vector balancing question (\cite{W}, Theorem 3). Do there exist
constants $N, r \in {\bf N}$ which
make the following statement true?
\begin{quote}
If $k \in {\bf N}$ and $\{v_1, \ldots, v_m\}$ is a finite sequence of vectors
in ${\bf C}^k$ satisfying $\|v_i\| = \frac{1}{\sqrt{N}}$ for all $i$ and
\[\sum_i |\langle u,v_i\rangle|^2 = 1\]
for all unit vectors $u$, then the index set $\{1, \ldots, m\}$ can be
partitioned into subsets $S_1, \ldots, S_r$ such that
\[\sum_{i \in S_j} |\langle u,v_i\rangle|^2 \leq 1 - \frac{1}{\sqrt{N}}\]
for all unit vectors $u$ and all $j = 1, \ldots, r$.
\end{quote}
Here $\|v\|$ is the euclidean norm of $v$.
It is unclear whether allowing $r > 2$ makes the problem any easier.
If we take $r = 2$ then we can equivalently ask whether it is always
possible to find a subset $S \subseteq \{1, \ldots, m\}$ satisfying
$\frac{1}{\sqrt{N}} \leq
\sum_{i \in S} |\langle u,v_i\rangle|^2 \leq 1 - \frac{1}{\sqrt{N}}$
for all unit vectors $u$. The purpose of this note is to present a
partial positive result in this direction: for any $n < m$
we can find a subset $S \subseteq \{1,\ldots, m\}$ with $|S| = n$ and such that
\[\sum_{i \in S} |\langle u,v_i\rangle|^2 \leq \frac{n}{m} +
O\left(\frac{1}{\sqrt{N}}\right)\]
for all unit vectors $u$. This is a ``one-sided'' version of the desired
result in the sense that we achieve an upper bound but not a lower bound.

Our theorem is a straightforward application of the spectral sparsification
technique introduced in Srivastava's thesis \cite{S}. This technique was
already related to KSP via Bourgain and Tzafriri's
restricted invertibility theorem \cite{SS, S}. That result can be converted
into one resembling ours as in the proof of Theorem 4.2 of \cite{CC}, but
with a substantially worse bound (on the order of
$\frac{n}{m} + 2\sqrt{\frac{n}{m}}$).

\section{The projection version of KSP}
The Kadison-Singer problem was first posed in \cite{KS} in the form
of a C*-algebraic question relating pure states on $B(l^2)$ to pure
states on its diagonal subalgebra. Since then it has been found to have
numerous equivalent versions, and it is now considered a major open
problem with relevance to topics ranging from Banach space theory to
signal processing. We refer to \cite{CC} for general background and a
survey of a variety of equivalent versions of the problem.

Our version is based on the approach of Akemann and Anderson \cite{AA}
in terms of projection matrices. A complex $m \times m$ matrix is a
{\it projection} if the associated linear map orthogonally projects
vectors in ${\bf C}^m$ onto some linear subspace $E$. Note that a
diagonal matrix is a projection if and only if its diagonal entries
are all either zero or one. The Akemann-Anderson
version of KSP asks whether there exist constants $\epsilon, \delta > 0$
and $r \in {\bf N}$ which make the following statement true.
\begin{quote}
If $m \in {\bf N}$ and $P$ is a complex $m \times m$ projection matrix whose
diagonal entries $p_{ii}$ satisfy $p_{ii} \leq \delta$, then there are
diagonal projections $Q_1, \ldots, Q_r$ which sum to the identity matrix
and satisfy $\|Q_jPQ_j\| \leq 1 - \epsilon$ for all $j = 1, \ldots, r$.
\end{quote}
That a positive answer to this question implies a positive solution to
KSP is essentially proven in Propositions 7.6 and 7.7 of \cite{AA},
and the reverse direction is shown in Theorem 1 of \cite{W}. A more
elementary approach to this reduction appears in \cite{B}.

The projection version of KSP is easily seen to be equivalent to a
vector balancing question similar to the one stated in the introduction.
Identify the range $E$ of $P$ with ${\bf C}^k$ where $k$ is the rank of $P$
and define $v_i = Pe_i$, $1 \leq i \leq m$, where $\{e_i\}$ is the standard
basis of ${\bf C}^m$. Then for any vector $u \in {\bf C}^m$ we have
\[\sum |\langle u, v_i\rangle|^2 = \sum |\langle Pu, e_i\rangle|^2
= \|Pu\|^2;\]
in particular, if $u$ is a unit vector in $E$ then this
sum equals 1. Also, $\|v_i\|^2 = \langle Pe_i, e_i\rangle = p_{ii}
\leq \delta$, giving us a bound on the size of the vectors $v_i$.
The diagonal projections $Q_j$ correspond to a partition of $S$ into
$r$ pieces.

The version of KSP stated in the introduction, in which the vectors $v_i$
all have the same norm $\frac{1}{\sqrt{N}}$,
can be achieved by adding extra dimensions
to the space and augmenting the vectors $v_i$ with components in these
extra dimensions. See Theorem 3 of \cite{W} for details. In the language
of projections, this corresponds to requiring that the diagonal entries
of $P$ all equal $\frac{1}{N}$, and asking for $\|Q_jPQ_j\| \leq
1 - \frac{1}{\sqrt{N}}$.

\section{Counterexamples}
Let $k, N \in {\bf N}$ and suppose $\{v_1, \ldots, v_m\}$ is a finite sequence
of vectors in ${\bf C}^k$ satisfying $\|v_i\| = \frac{1}{\sqrt{N}}$ for
$1 \leq i \leq m$ and
\[\sum_i |\langle u,v_i\rangle|^2 = 1\]
for all unit vectors $u$. According to Proposition 4 of \cite{W} we
can find a subset $S \subseteq \{1, \ldots, m\}$ such that
\[\frac{1}{2} - \frac{1}{N} \leq \sum_{i \in S} |\langle e_j, v_i\rangle|^2
\leq \frac{1}{2} + \frac{1}{N}\]
for all $1 \leq j \leq k$, where $\{e_j\}$ is the standard basis of
${\bf C}^k$. This follows by applying the continuous Beck-Fiala theorem
\cite{AA2} to the vectors $(|\langle e_1, v_i\rangle|^2, \ldots,
|\langle e_k,v_i\rangle|^2) \in {\bf R}^k$ for $1 \leq i \leq m$. Thus,
on a fixed orthonormal basis a very tight bound can be achieved. However,
this bound is too strong in general. We know from Example 7 of \cite{W}
that there are configurations of vectors with $N, k \to \infty$
such that for
any $S$ there is some unit vector $u$ for which the sum
$\sum_{i \in S} |\langle u,v_i\rangle|^2$ lies outside
an interval which is asymptotic to $(\frac{1}{2} - \frac{1}{4\sqrt{2N}},
\frac{1}{2} + \frac{1}{4\sqrt{2N}})$.
Thus, the worst counterexamples we know of are $O(\frac{1}{\sqrt{N}})$
away from $\frac{1}{2}$. The obvious conjecture is that we can always find
a set of indices $S$ for which
\[\frac{1}{2} - O\left(\frac{1}{\sqrt{N}}\right)
\leq \sum_{i \in S} |\langle u,v_i\rangle|^2
\leq \frac{1}{2} + O\left(\frac{1}{\sqrt{N}}\right)\]
for all unit vectors $u$. Or perhaps even for any $q \in (0,1)$ we can
always find an $S$ for which
\[q - O\left(\frac{1}{\sqrt{N}}\right)
\leq \sum_{i \in S} |\langle u,v_i\rangle|^2
\leq q + O\left(\frac{1}{\sqrt{N}}\right)\]
for all unit vectors $u$.

What we are trying to accomplish is to build up a set of indices $S$
which makes $\sum_{i \in S} |\langle u,v_i\rangle|^2$ uniformly greater
than $0$ in all directions $u$, while preventing this sum from getting
too close to $1$ in any direction. An example due to Nets Katz \cite{K}
shows why this may be difficult. Fix $N \in {\bf N}$ and let $X$ be the
family of all subsets of $\{1, \ldots, 2N\}$ of size $N$. For each
$1 \leq i \leq 2N$ let $f_i: X \to {\bf R}$ be the function satisfying
$f_i(A) = 0$ if $i \not\in A$ and $f_i(A) = \frac{1}{N}$ if $i \in A$. Then
$\sum_i f_i(A) = |A|\cdot\frac{1}{N} = 1$ for each $A \in X$, so
the functions $f_i$ have sup norm $\frac{1}{N}$ and sum up to $1$ at every
point. Now for any $S \subseteq \{1, \ldots, 2N\}$, if $|S| \leq N$ then we
can find $A \in X$ disjoint from $S$, so that $\sum_{i \in S} f_i(A) = 0$.
But if $|S| \geq N$ then it contains some $A \in X$, and we then have
$\sum_{i \in S} f_i(A) = 1$. So we cannot get away from $0$ at all
points of $X$ without summing to $1$ at some point.

However, the result we prove in the next section shows that nothing like this
can happen with KSP. In Katz's example any set of at least half of
the functions $f_i$ must sum to 1 at some point. Whereas our
theorem achieves an upper bound only $O(\frac{1}{\sqrt{N}})$ higher than
$\frac{|S|}{m}$.

\section{An upper bound}
As in the introduction, $\{v_1, \ldots, v_m\}$ will be a finite set of
vectors in ${\bf C}^k$, each of norm $\frac{1}{\sqrt{N}}$, satisfying
\[\sum_i |\langle u,v_i\rangle|^2 = 1\]
for all unit vectors $u$.
We work with the linear operators $v \otimes v: {\bf C}^k \to {\bf C}^k$
defined by $(v\otimes v)(u) = \langle u,v\rangle v$. For any $S \subseteq
\{1, \ldots, m\}$ the operator $T = \sum_{i \in S} v_i\otimes v_i$ satisfies
\[\langle Tu,u\rangle = \sum_{i \in S} |\langle u,v_i\rangle|^2,\]
and the values $\langle Tu,u\rangle$, for $u$ a unit vector in ${\bf C}^k$,
are all bounded above by $\|T\|$. Thus we are interested in
choosing $S$ so as to minimize $\|T\|$. Note that ${\rm Tr}(v_i\otimes v_i) =
\|v_i\|^2 = \frac{1}{N}$ and $\sum_{i=1}^m v_i \otimes v_i = I$, which has
trace $k$, so that $m = kN$.

Let $n < m$. As in \cite{S}, we build the subset $S$ one vector at a
time. Thus our procedure will select vectors $v_{i_1}, \ldots, v_{i_n}$
with corresponding operators $T_j = \sum_{d=1}^j v_{i_d} \otimes v_{i_d}$.
For any positive operator $T$ and any $a > \|T\|$, define the
{\it upper potential} $\Phi^a(T)$ to be
\[\Phi^a(T) = {\rm Tr}((aI - T)^{-1});\]
then having chosen the vectors $v_{i_1}, \ldots, v_{i_{j-1}}$ we will
select a new vector $v_{i_j}$ so as to minimize $\Phi^{a_j}(T_j)$, where
the $a_j$ are an increasing sequence of upper bounds.
This potential function disproportionately penalizes eigenvalues which are
close to $a_j$ and thereby controls the maximum eigenvalue, i.e., the norm,
of $T_j$. The key fact about the upper potential is given in the following
result.

\begin{lemma}\label{lem1}
(\cite{S}, Lemma 3.4)
Let $T$ be a positive operator on ${\bf C}^k$, let $a, \delta > 0$, and
let $v \in {\bf C}^k$. Suppose $\|T\| < a$. If
\[\frac{\langle ((a + \delta)I - T)^{-2}v,v\rangle}{\Phi^a(T)
- \Phi^{a+\delta}(T)}
+ \langle ((a + \delta)I - T)^{-1}v,v\rangle \leq 1\]
then $\|T + v\otimes v\| < a + \delta$ and
$\Phi^{a + \delta}(T + v\otimes v) \leq \Phi^a(T)$.
\end{lemma}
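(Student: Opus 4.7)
The plan is to apply the Sherman--Morrison rank-one inverse formula, which is the standard tool in all the Batson--Spielman--Srivastava arguments. Write $A = (a+\delta)I - T$; then $A$ is positive definite since $\|T\| < a < a + \delta$. The operator we need to understand is $A - v\otimes v = (a+\delta)I - (T + v\otimes v)$. The Sherman--Morrison identity gives
\[
(A - v\otimes v)^{-1} = A^{-1} + \frac{A^{-1}(v\otimes v)A^{-1}}{1 - \langle A^{-1}v, v\rangle}
\]
provided $\langle A^{-1}v,v\rangle \neq 1$. Taking traces and using $\mathrm{Tr}(A^{-1}(v\otimes v)A^{-1}) = \langle A^{-2}v,v\rangle$ yields
\[
\Phi^{a+\delta}(T + v\otimes v) \;=\; \Phi^{a+\delta}(T) + \frac{\langle ((a+\delta)I - T)^{-2}v,v\rangle}{1 - \langle ((a+\delta)I - T)^{-1}v,v\rangle}.
\]

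Next I would rearrange the hypothesis. Set $X = \langle ((a+\delta)I-T)^{-2}v,v\rangle$, $Y = \langle ((a+\delta)I-T)^{-1}v,v\rangle$, and $D = \Phi^a(T) - \Phi^{a+\delta}(T)$; note $X \geq 0$ and, because $a < a+\delta$, $D \geq 0$. The hypothesis reads $X/D + Y \leq 1$, i.e., $X \leq (1-Y)D$. In particular $Y < 1$ (since $X \geq 0$ and $D > 0$ whenever $v \neq 0$; the $v=0$ case is trivial), so dividing gives $X/(1-Y) \leq D$. Substituting into the Sherman--Morrison trace identity immediately gives $\Phi^{a+\delta}(T+v\otimes v) \leq \Phi^{a+\delta}(T) + D = \Phi^a(T)$, which is the potential bound.

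For the operator-norm bound $\|T + v\otimes v\| < a+\delta$, I would use a continuity/path argument: consider the family $T_t = T + t\,v\otimes v$ for $t \in [0,1]$. The same Sherman--Morrison calculation, applied to $A - t\,v\otimes v$, shows that $(a+\delta)I - T_t$ is invertible as long as $1 - tY \neq 0$; since $0 \leq Y < 1$, this holds throughout $[0,1]$. At $t=0$ the operator $(a+\delta)I - T$ is positive definite, and as $t$ varies the eigenvalues of $(a+\delta)I - T_t$ move continuously and never cross zero, so they remain positive on the whole interval. Evaluating at $t=1$ gives $\|T+v\otimes v\| < a+\delta$, completing the proof.

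The routine calculation is the Sherman--Morrison step; the only point that requires mild care is the norm assertion, since one must rule out the possibility that an eigenvalue crosses $a+\delta$ somewhere in the path $T_t$. This is handled by the observation that invertibility of $(a+\delta)I - T_t$ along the whole segment forces positivity by continuity, which is the only genuinely non-algebraic ingredient in the argument.
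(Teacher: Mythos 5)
Your proof is correct and takes the same route the paper indicates: the paper cites Srivastava's thesis for this lemma and remarks only that the proof rests on the Sherman--Morrison rank-one inverse identity, which is exactly the engine of your argument, together with the standard continuity/eigenvalue-tracking step for the norm bound. Your rearrangement $X \le (1-Y)D$ forcing $Y < 1$ (via $X = \|A^{-1}v\|^2 > 0$ for $v \ne 0$) and the path argument on $T_t = T + t\,v\otimes v$ are both correct and match the intended proof.
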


The proof relies on the Sherman-Morrison formula, which states that if $T$
is positive and invertible then $(T + v\otimes v)^{-1} = T^{-1} -
\frac{T^{-1}(v\otimes v)T^{-1}}{I + \langle T^{-1}v,v\rangle}$.

We also require a simple inequality.

\begin{lemma}\label{lem2}
Let $a_1 \leq \cdots \leq a_k$ and $b_1 \geq \cdots \geq b_k$ be sequences
of positive real numbers, respectively increasing and decreasing. Then
$\sum a_ib_i \leq \frac{1}{k}\sum a_i \sum b_i$.
\end{lemma}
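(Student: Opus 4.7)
This is Chebyshev's sum inequality, and the cleanest proof is the standard ``double sum'' trick. The plan is to observe that for each pair of indices $i,j$, the quantity $(a_i - a_j)(b_i - b_j)$ is nonpositive: when $i \leq j$ we have $a_i - a_j \leq 0$ and $b_i - b_j \geq 0$ by the monotonicity hypotheses, and when $i \geq j$ both signs flip, so the product is again nonpositive. Hence
\[
\sum_{i,j=1}^k (a_i - a_j)(b_i - b_j) \leq 0.
\]

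Then I would simply expand the left-hand side. The cross terms $-a_i b_j$ and $-a_j b_i$ each sum to $-\sum a_i \sum b_i$, and the diagonal-type terms $a_i b_i$ and $a_j b_j$ each contribute $k \sum a_i b_i$. So the expansion gives $2k \sum a_i b_i - 2 \sum a_i \sum b_i \leq 0$, and dividing by $2k$ yields the claimed inequality.

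There is no real obstacle here; the only thing to be careful about is tracking the sign of $(a_i - a_j)(b_i - b_j)$ in both the $i \leq j$ and $i \geq j$ cases and confirming that it is nonpositive in both (one pair of differences is weakly negative and the other weakly positive, so the product is always $\leq 0$). Positivity of the $a_i$ and $b_i$ is not actually needed for the inequality as stated, though it is of course consistent with the hypothesis. The result is tight, with equality when either sequence is constant.
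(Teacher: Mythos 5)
Your proof is correct, but it takes a different route from the paper's. You recognize the statement as Chebyshev's sum inequality and give the standard symmetric argument: the pointwise inequality $(a_i - a_j)(b_i - b_j) \leq 0$, summed over all pairs and expanded. The paper instead splits around the mean $M = \frac{1}{k}\sum b_i$: it chooses $j$ so that $b_i \geq M$ precisely for $i \leq j$, bounds $\sum_{i\le j} a_i(b_i - M)$ and $\sum_{i>j} a_i(b_i - M)$ each by $a_j$ times the corresponding sum of $(b_i - M)$, and concludes since $\sum(b_i - M) = 0$. Both arguments are short and elementary. Your double-sum version is more symmetric, immediately identifies the statement with a classical named inequality, and makes transparent that positivity of the $a_i, b_i$ is not actually needed (a point you correctly flag). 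The paper's version is an equally valid ``rearrangement around the average'' argument; it is perhaps slightly less slick but makes the role of the mean $M$ explicit, which aligns with how the lemma is deployed later (bounding a trace of a product by the product of normalized traces). Either proof would serve the paper's purposes.
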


\begin{proof}
Let $M = \frac{1}{k}\sum b_i$. We want to show that $\sum a_i b_i \leq
\sum a_iM$, i.e., that $\sum a_i(b_i - M) \leq 0$. Since the sequence $(b_i)$
is decreasing, we can find $j$ such that $b_i \geq M$ for $i \leq j$ and
$b_i < M$ for $i > j$. Then $\sum_{i=1}^j a_i(b_i - M) \leq a_j\sum_{i=1}^j
(b_i-M)$ (since the $a_i$ are increasing and the values $b_i - M$ are positive)
and $\sum_{i=j+1}^k a_i(b_i-M) \leq a_j\sum_{i=j+1}^k (b_i-M)$
(since the $a_i$ are increasing and the values $b_i - M$ are negative). So
\[\sum_{i=1}^k a_i(b_i-M) \leq a_j \sum_{i=1}^k (b_i-M) = 0,\]
as desired.
\end{proof}

\begin{theo}\label{thm1}
Let $k, N \in {\bf N}$ and let $\{v_1, \ldots, v_m\}$ be a finite sequence
of vectors in ${\bf C}^k$ satisfying $\|v_i\| = \frac{1}{\sqrt{N}}$ for
$1 \leq i \leq m$ and
\[\sum_i |\langle u,v\rangle|^2 = 1\]
for all unit vectors $u$. Then for any $n < m$
there is a set $S \subseteq \{1, \ldots, m\}$ with $|S| = n$ such that
\[\sum_{i \in S} |\langle u,v_i\rangle|^2 \leq \frac{n}{m} +
O\left(\frac{1}{\sqrt{N}}\right)\]
for all unit vectors $u$.
\end{theo}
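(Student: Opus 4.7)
I plan to follow Srivastava's iterative barrier construction. Starting from $T_0 = 0$, pick $i_1, \ldots, i_n$ one at a time, setting $T_j = T_{j-1} + v_{i_j}\otimes v_{i_j}$, and maintain an upper bound $a_j$ on $\|T_j\|$ together with the invariant $\Phi^{a_j}(T_j) \leq \Phi_0 := k/a_0$. Each step increases the barrier by $\delta_j$, setting $a_j = a_{j-1} + \delta_j$, and applies Lemma~\ref{lem1} with $T = T_{j-1}$, $a = a_{j-1}$, $\delta = \delta_j$, $v = v_{i_j}$ to simultaneously control $\|T_j\|$ and $\Phi^{a_j}(T_j)$. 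At the end $\|T_n\| < a_n = a_0 + \sum_{j=1}^n \delta_j$, so the goal reduces to choosing $a_0$ and the $\delta_j$ so that $a_n \leq n/m + O(1/\sqrt N)$.

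The core task is producing a valid $i_j$ at each step, and I will do this by an averaging argument over the $m-j+1$ unchosen indices. Write $A = (a_j I - T_{j-1})^{-1}$, so the hypothesis of Lemma~\ref{lem1} becomes $\langle A^2 v,v\rangle/D + \langle Av,v\rangle \leq 1$ with $D = \Phi^{a_{j-1}}(T_{j-1}) - \Phi^{a_j}(T_{j-1})$. Using $\sum_{i=1}^m v_i\otimes v_i = I$, the identity $\sum_i\langle B v_i,v_i\rangle = \mathrm{Tr}(B)$ gives
\[\sum_{i\notin S_{j-1}}\langle A^s v_i,v_i\rangle \;=\; \mathrm{Tr}\bigl(A^s(I - T_{j-1})\bigr) \qquad(s = 1,2).\]
This is where Lemma~\ref{lem2} enters: in the eigenbasis of $T_{j-1}$, with eigenvalues $\mu_1 \leq \cdots \leq \mu_k < a_j$, the diagonal entries of $A^s$ are $1/(a_j-\mu_i)^s$, increasing in $\mu_i$, while those of $I - T_{j-1}$ are $1 - \mu_i$, decreasing; both are positive as long as $a_j < 1$ (outside this regime the theorem is trivially true since $\sum_{i\in S}|\langle u,v_i\rangle|^2 \leq 1$). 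Hence
\[\mathrm{Tr}\bigl(A^s(I - T_{j-1})\bigr) \;\leq\; \frac{\mathrm{Tr}(I - T_{j-1})}{k}\,\mathrm{Tr}(A^s) \;=\; \Bigl(1 - \tfrac{j-1}{m}\Bigr)\mathrm{Tr}(A^s),\]
using $\mathrm{Tr}(T_{j-1}) = (j-1)/N$ and $m = kN$. Combined with the integral bound $\mathrm{Tr}(A^2) \leq D/\delta_j$ (monotonicity of $(tI-T_{j-1})^{-2}$ in $t$), the averaged version of the Lemma~\ref{lem1} condition reads
\[\Bigl(1-\tfrac{j-1}{m}\Bigr)\Bigl[\frac{1}{\delta_j} + \Phi^{a_j}(T_{j-1})\Bigr] \;\leq\; m-j+1 \;=\; m\Bigl(1-\tfrac{j-1}{m}\Bigr),\]
so the common factor $1-(j-1)/m$ cancels and the condition collapses to $1/\delta_j + \Phi^{a_j}(T_{j-1}) \leq m$. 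Since $\Phi^{a_j}(T_{j-1}) \leq \Phi_0$ by the running invariant, it suffices to take $\delta_j = 1/(m - \Phi_0)$ for every~$j$.

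This yields $a_n = a_0 + n/(m - k/a_0)$. Choosing $a_0 = 1/\sqrt N$ makes $k/a_0 = k\sqrt N = m/\sqrt N$, so
\[a_n \;=\; \frac{1}{\sqrt N} + \frac{n/m}{1 - 1/\sqrt N} \;\leq\; \frac{n}{m} + O\Bigl(\frac{1}{\sqrt N}\Bigr),\]
using $n/m \leq 1$, which completes the plan. The step I expect to be the crux, and the only place where the computation might look delicate, is the cancellation in the averaged Lemma~\ref{lem1} condition: the factor $1 - (j-1)/m$ produced by Lemma~\ref{lem2} is exactly what is needed to match $m - j + 1$ on the right, allowing a uniform step size $\delta_j = 1/(m-\Phi_0)$. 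Without Lemma~\ref{lem2} one is led to a sum $\sum_{j=1}^n 1/(m-j+1-\Phi_0)$ whose logarithmic growth would defeat the target $O(1/\sqrt N)$.
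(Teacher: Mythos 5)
Your proposal is correct and takes essentially the same approach as the paper: same barrier-potential argument via Lemma~\ref{lem1}, same step sizes $a_j = \frac{1}{\sqrt N} + \bigl(1 + \frac{1}{\sqrt N - 1}\bigr)\frac{j}{m}$, same use of Lemma~\ref{lem2} in the averaging, and the same final cancellation against $|S'| = m - j + 1$. Your framing of the cancellation of the factor $1 - \frac{j-1}{m}$ as the crux is a clean way to see why the paper's computation closes, but the underlying mechanics are identical.
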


\begin{proof}
Define $a_i = \frac{1}{\sqrt{N}} +
\left(1 + \frac{1}{\sqrt{N}-1}\right)\frac{i}{m}$ for $0 \leq i \leq n$.
We will find a sequence of distinct indices $i_1, \ldots, i_n$ such that
the operators $T_j = \sum_{d = 1}^j v_{i_d} \otimes v_{i_d}$,
$0 \leq j \leq n$, satisfy $\|T_j\| < a_j$ and $\Phi^{a_0}(T_0) \geq
\cdots \geq \Phi^{a_n}(T_n)$. Thus
\[\|T_n\| < \frac{1}{\sqrt{N}}
+ \left(1 + \frac{1}{\sqrt{N}-1}\right)\frac{n}{m}
= \frac{n}{m} + O\left(\frac{1}{\sqrt{N}}\right),\]
yielding the desired conclusion. We start with $T_0 = 0$, so that
$\Phi^{a_0}(T_0) = \Phi^{1/\sqrt{N}}(0) = {\rm Tr}((\frac{1}{\sqrt{N}}I)^{-1})
= k\sqrt{N}$.

To carry out the induction step, suppose $v_{i_1}, \ldots, v_{i_j}$ have
been chosen. Let $\lambda_1 \leq \cdots \leq \lambda_k$ be the eigenvalues
of $T_j$. Then the eigenvalues of $I - T_j$ are $1-\lambda_1 \geq \cdots
\geq 1 - \lambda_k$ and the eigenvalues of $(a_{j+1}I - T_j)^{-1}$ are
$\frac{1}{a_{j+1} - \lambda_1} \leq \cdots \leq \frac{1}{a_{j+1} - \lambda_k}$.
Thus by Lemma \ref{lem2}
\begin{eqnarray*}
{\rm Tr}((a_{j+1}I - T_j)^{-1}(I - T_j))
&=& \sum_{d=1}^k \frac{1}{a_{j+1} - \lambda_d}(1 - \lambda_d)\cr
&\leq& \frac{1}{k}\sum_{d=1}^k \frac{1}{a_{j+1} - \lambda_d}
\sum_{d=1}^k (1-\lambda_d)\cr
&=& \frac{1}{k}{\rm Tr}((a_{j+1}I - T_j)^{-1}){\rm Tr}(I-T_j)\cr
&=& \frac{1}{k} \Phi^{a_{j+1}}(T_j){\rm Tr}(I - T_j)\cr
&\leq& \frac{1}{k} \Phi^{a_j}(T_j){\rm Tr}(I-T_j)\cr
&\leq& \frac{1}{k} \Phi^{a_0}(T_0){\rm Tr}(I-T_j)\cr
&=& \sqrt{N}{\rm Tr}(I - T_j).
\end{eqnarray*}

Next, $a_{j+1} - a_j = (1 + \frac{1}{\sqrt{N}-1})\frac{1}{m}$, so we can
estimate
\begin{eqnarray*}
\Phi^{a_j}(T_j) - \Phi^{a_{j+1}}(T_j)
&=& {\rm Tr}( (a_jI - T_j)^{-1} - (a_{j+1}I - T_j)^{-1})\cr
&=& \left(1 + \frac{1}{\sqrt{N}-1}\right)\frac{1}{m}
{\rm Tr}( (a_jI - T_j)^{-1}(a_{j+1}I - T_j)^{-1})\cr
&>& \left(1 + \frac{1}{\sqrt{N}-1}\right)\frac{1}{m}
{\rm Tr}((a_{j+1}I - T_j)^{-2})
\end{eqnarray*}
since each of the eigenvalues
$\frac{1}{a_j - \lambda_d}\frac{1}{a_{j+1} - \lambda_d}$ of the operator
$(a_jI - T_j)^{-1}(a_{j+1}I - T_j)^{-1}$ is greater than the corresponding
eigenvalue $\frac{1}{(a_{j+1} - \lambda_d)^2}$ of the operator
$(a_{j+1}I - T_j)^{-2}$. Combining this with Lemma \ref{lem2} yields
\begin{eqnarray*}
{\rm Tr}((a_{j+1}I - T_j)^{-2}(I - T_j))
&\leq& \frac{1}{k}{\rm Tr}((a_{j+1}I - T_j)^{-2}){\rm Tr}(I - T_j)\cr
&<& N\left(1 - \frac{1}{\sqrt{N}}\right)
(\Phi^{a_j}(T_j) - \Phi^{a_{j+1}}(T_j)){\rm Tr}(I-T_j)
\end{eqnarray*}
since $(1 + \frac{1}{\sqrt{N} -1})^{-1} = 1 - \frac{1}{\sqrt{N}}$. Thus
\[\frac{{\rm Tr}((a_{j+1}I - T_j)^{-2}(I-T_j))}{\Phi^{a_j}(T_j) -
\Phi^{a_{j+1}}(T_j)}
\leq N\left(1 - \frac{1}{\sqrt{N}}\right){\rm Tr}(I-T_j).\]

Now let $S' \subseteq \{1, \ldots, m\}$ be the set of indices which
have not yet been used. Observe that
$\langle Tv,v\rangle = {\rm Tr}(T(v\otimes v))$ and that
$\sum_{i \in S'} v_i \otimes v_i = I - \sum_{d = 1}^j v_{i_d}\otimes v_{i_d}
= I - T_j$. Thus
\begin{eqnarray*}
&&\sum_{i \in S'} \left(\frac{\langle (a_{j+1}I -
T_j)^{-2}v_i,v_i\rangle}{\Phi^{a_j}(T_j) - \Phi^{a_{j+1}}(T_j)}
+ \langle (a_{j+1}I - T_j)^{-1}v_i,v_i\rangle\right)\cr
&&= \frac{{\rm Tr}((a_{j+1}I - T_j)^{-2}(I - T_j))}{\Phi^{a_j}(T_j) -
\Phi^{a_{j+1}}(T_j)}
+ {\rm Tr}((a_{j+1}I - T_j)^{-1}(I - T_j))\cr
&&\leq N\left(1 - \frac{1}{\sqrt{N}}\right){\rm Tr}(I-T_j) +
\sqrt{N}{\rm Tr}(I - T_j)\cr
&&= N{\rm Tr}(I - T_j)
\end{eqnarray*}
But
\[N{\rm Tr}(I-T_j) = N(k - {\rm Tr}(T_j)) = m - j\]
is exactly the number of elements of $S'$. So there must exist some $i \in S'$
for which
\[\frac{\langle (a_{j+1}I - T_j)^{-2}v_i,v_i\rangle}{\Phi^{a_j}(T_j)
- \Phi^{a_{j+1}}(T_j)}
+ \langle (a_{j+1}I - T_j)^{-1}v_i,v_i\rangle
\leq 1.\]
Therefore, by Lemma \ref{lem1}, choosing $v_{i_{j+1}} = v_i$ allows the
inductive construction to proceed.
\end{proof}

In terms of projections, Theorem \ref{thm1} states that if $k, N \in {\bf N}$
and $P$ is a projection acting on ${\bf C}^k$ whose diagonal entries $p_{ii}$
all equal $\frac{1}{N}$, then for each $n < kN$ there is a diagonal projection
$Q$ with ${\rm Tr}(Q) = \frac{n}{N}$ and such that $\|QPQ\| \leq
\frac{n}{kN} + O(\frac{1}{\sqrt{N}})$.

\section{A lower bound}
In order to produce a positive solution to the Kadison-Singer problem
we would have to improve Theorem \ref{thm1} to simultaneously include
a lower bound on $\sum_{i \in S} |\langle u,v_i\rangle|^2$. Now
${\rm Tr}(T_n) = \frac{n}{N}$ and $T_n \leq a_nI$ where $a_n =
\frac{n}{m} + O(\frac{1}{\sqrt{N}})$, and thus ${\rm Tr}(a_nI) =
\frac{n}{N} + O(\frac{k}{\sqrt{N}})$. So most of the eigenvalues of
$T_n$ must be around $\frac{n}{m}$. The problem is that there could be
a small fraction of eigenvalues at or near zero.

If we only want a lower bound, the simplest way to achieve this
is to apply Theorem \ref{thm1} and take the operator $I - T_n$. If
$T_n = \sum_{i \in S} v_i\otimes v_i$ then $I - T_n =
\sum_{i \in S^c} v_i \otimes v_i$, so $I - T_n$ is obtained by summing over
$m - n$ vectors. And the upper bound $T_n \leq a_nI$ translates to the
lower bound $I - T_n \geq (1 - a_n)I =
(\frac{m-n}{m} - O(\frac{1}{\sqrt{N}}))I$.
Here the danger is that there could be a small fraction of eigenvalues of
$I - T_n$ at or near one.

If one tries to run the argument of Theorem \ref{thm1} in a way that
simultaneously achieves both upper and lower bounds, one discovers
that the two cases are not really symmetric. At each step the upper bound
recedes, and we need to choose a new vector $v_{i_{j+1}}$ in a way that
avoids overtaking the upper bound. By making the upper bound recede faster,
i.e., by increasing the step size from $a_j$ to $a_{j+1}$, we can ensure
that any desired fraction of the remaining vectors will accomplish this.
The lower bound, on the other hand, is chasing the lower eigenvalues of
$T_j$ and in order to avoid increasing the lower potential we may have
to choose a vector which is concentrated on a possibly small number of
low eigenvalues. Slowing down the lower step size would only delay this.

In order to handle both upper and lower bounds simultaneously, we have to
avoid falling into a situation where the lower bound is approaching a
handful of small eigenvalues, and the only vectors available which have
components among these small eigenvalues also have components
among the largest eigenvalues, and thus cannot be selected without
overtaking the upper bound. It does not seem possible that any greedy
algorithm of the kind used in the proof of Theorem \ref{thm1} could be
sure to prevent such a situation from developing.

\bigskip
\bigskip

\end{document}